\newtheorem{theorem}{\bf Theorem}[section]
\newtheorem{lemma}{\bf Lemma}[section]
\newtheorem{definition}{\bf Definition}[section]
\newcommand{\R}{{\mathbb R}}
\newcommand{\N}{{\mathbb N}}
\newcommand{\C}{{\mathcal C}}
\newcommand{\eps}{\varepsilon}
\newcommand{\dd}{\;{\rm d}}
\newcommand{\PP}{{\mathcal P}}
\newcommand{\W}{{\mathcal W}}
\newcommand{\un}{{\rm 1\kern -2.5pt l}}
\newcommand\pref[1]{(\ref{#1})}
\let \eps\varepsilon
\DeclareMathOperator{\diam}{diam}
\def\<#1,#2>{\left<#1,#2\right>}
\newcommand\ovx{{\overline{x}}}
\newcommand\oX{{\overline{X}}}
\newcommand\ws{\stackrel{*}{\rightharpoonup}}
\newcommand\oJ{\overline{J}}
\def\PP{{\cal P}}
\newcommand\tiln{{\widetilde{\nu}}}
\begin{document}

\title{From Nash to Cournot-Nash equilibria via the Monge-Kantorovich problem}

\author{
Adrien Blanchet$^{1}$, Guillaume Carlier$^{2}$}

\address{$^{1}$IAST/TSE (GREMAQ, Universit\'e de Toulouse), 21 All\'ee de Brienne, 31015 Toulouse, France\\
$^{2}$CEREMADE - Universit\'e Paris Dauphine, Place de Lattre de Tassigny, 75775 Paris C\'edex 16, France}

\subject{35K65, 35K40, 47J30, 35Q92, 35B33}

\keywords{Nash equilibria, games with a continuum of players, Cournot-Nash equilibria, Monge-Kantorovich optimal transportation problem.}

\corres{Adrien Blanchet\\
\email{Adrien.Blanchet@ut-capitole.fr}}

\begin{abstract}
The notion of Nash equilibria plays a key role in the analysis 
of strategic interactions in the framework of $N$ player games. Analysis 
of Nash equilibria is however a complex issue when the number of players 
is large. In this article we emphasize the role of optimal transport 
theory in: 1) the passage from Nash to Cournot-Nash 
equilibria as the number of players tends to infinity, 2) the analysis 
of Cournot-Nash equilibria.
\end{abstract}




\maketitle
\section{Introduction}

Since the seminal work of Nash \cite{Nash1950}, the notion of Nash equilibrium  for $N$-person games has become the key concept to analyse strategic interaction situations and  plays a major role not only in economics but also in biology, computer science, etc. The existence of Nash equilibria  generally relies on nonconstructive fixed-point argument as in the original work of Nash. They are therefore generally hard to compute especially for a large number of players. A traditional question in game theory and theoretical economics is whether the analysis of equilibria simplifies as the number $N$ of players becomes so large that one can use models with a continuum of agents. Indeed, since  Aumann's seminal works \cite{Aumann,Aumann2} models with a continuum of agents have occupied a distinguished position in economics and game theory.   The challenging new theory of Mean-Field Games  of Lasry and Lions \cite{mfg1,mfg2,mfg3} shed some new light on this issue and renewed the interest in the rigorous derivation of continuum models as the limit of models with $N$ players, letting $N$ tend to infinity. Mean-Field Games theory addresses complex dynamic situations (stochastic differential games with a large number of players) and in the sequel, we will only consider simpler static situations.

\smallskip

 In \cite{Schmeidler}, Schmeidler introduced a notion of non-cooperative equilibrium in games with a continuum of agents, having in mind such diverse applications as elections, many small buyers from a few competing firms, drivers that can
choose among several roads. Mas-Colell \cite{MasColell}  reformulated Schmeidler's analysis in a framework that presents great analogies with the Monge-Kantorovich optimal transport problem. Indeed, in Mas-Colell's formulation, players are characterised by some type (productivity, tastes, wealth, geographic position, etc.), whose probability distribution $\mu$ is given. Each agent has to choose a strategy so as to minimise a cost that not only depends on her type but also on the probability distribution of strategies resulting from the behaviour of the rest of the population. A Cournot-Nash equilibrium then is a joint probability distribution of types and strategies with first marginal $\mu$ (given) and second marginal $\nu$ (unknown) that is concentrated on cost minimising strategies. The difficulty of the problem lies in the fact that the cost depends on $\nu$. For relevant applications, this dependence can be quite complex since there may be both attractive (mimetism) and repulsive (congestion) effects in the cost. 
\smallskip

Another situation where one looks for joint probabilities (or couplings) is the optimal transport problem of Monge-Kantorovich. In the Monge-Kantorovich problem, one is given two probability spaces $(\Theta, \mu)$ and $(X, \nu)$, a transport cost $c$ and one looks for the cheapest way to transport $\mu$ to $\nu$ for the cost $c$. In Monge's original formulation, one looks for a transport  $T$ between $\mu$ and $\nu$ {\it i.e.} a measurable map from $\Theta$ to $X$ such that $T_\#\mu=\nu$ where   $T_\#\mu$ denotes the image of $\mu$ by $T$ {\it i.e.}
\[T_\#\mu(B)=\mu(T^{-1} (B)),\quad  \mbox{$\forall B \subset X$ measurable}\]
that minimises the total transport cost 
$$\int_{\Theta} c(\theta, T(\theta)) \,\mu(\!\dd\theta)\;.$$
Since there may exist no transport map  between $\mu$ and $\nu$ and the mass conservation constraint $T_\#\mu=\nu$ is highly nonlinear, Kantorovich relaxed Monge's formulation in the 1940's by considering the notion of transport plan (that allows mass splitting and therefore convexifies Monge's problem).  A transport plan between $\mu$ and $\nu$ is by definition a joint probability measure on $\Theta\times X$  having $\mu$ and $\nu$ as marginals. Denoting by  $\Pi(\mu, \nu)$  the set of transport plan between $\mu$ and $\nu$, the least cost of transporting $\mu$ to $\nu$ for the cost $c$ then is the value of the Monge-Kantorovich optimal transport problem:
\[
\W_c(\mu, \nu):=\inf_{\gamma\in \Pi(\mu, \nu)}  \int_{\Theta \times X} c(\theta, x)  \,\gamma(\!\dd\theta, \dd x)\;.
\]
In the case where we have as same source and target space, a compact metric space $X$ with distance $d_X$, the previous definition defines the usual $1$-Wasserstein metric, $\W_1$:
\[
\W_1(\mu, \nu):= \inf_{\gamma\in \Pi(\mu, \nu)}  \int_{X \times X} d_X(x, y) \, \gamma(\!\dd x, \dd y)
\]
which is well-known to metrize the weak-$*$ topology. We refer the reader to the  textbooks of Villani \cite{villani,villani2} for a detailed exposition of optimal transport theory. 
\smallskip

The purpose of the present article is twofold:
\begin{itemize}
\item emphasise the role of optimal transport theory as a simple but powerful tool to obtain rigorous passage from Nash to Cournot-Nash as the number of players tends to infinity (see Theorem \ref{mixcnn}),  
\item give an account of~\cite{BC,abgc2} which identifies some classes of games with a continuum of players for which Cournot-Nash equilibria can be characterised thanks to optimal transport and actually computed numerically in some nontrivial situations.  
\end{itemize}
\smallskip

The paper is organised as follows. Section \ref{Nash} recalls the classical results of Nash regarding equilibria for $N$-person games. In Section \ref{CN}, we define Cournot-Nash equilibria in the framework of games with a continuum of agents. In Section \ref{ncn}, we give asymptotic results concerning the convergence of Nash equilibria to Cournot-Nash equilibria as the number of players tends to infinity. In Section \ref{cnmk}, we give an account on recent results from \cite{BC,abgc2} which enable one to find Cournot-Nash equilibria using tools from optimal transport and perform numerical simulations. 
\section{Nash equilibria}\label{Nash}
An $N$-person game, consists of a set of $N$ players $i\in \{1, \cdots, N\}$, each player $i$ has a  strategy space $X_i$, setting
\[\oX:=\Pi_{i=1}^N X_i\] 
and for $x\in X$, setting $x=(x_i, x_{-i})\in X=X_i \times \Pi_{j\neq i } X_j$, the cost function of player $i$ is a function
\[J_i: \; \oX=X_i \times  X_{-i} \to \R \quad \mbox{where} \quad X_{-i}:=\Pi_{j\neq i } X_j.\] 
In this classical setting a Nash-equilibrium is defined as follows:
\begin{definition}[Nash equilibrium]
A \emph{Nash-equilibrium} is a collection of strategies $\ovx=(\ovx_1, \cdots, \ovx_N)=(\ovx_i, \ovx_{-i})\in \oX$ such that, for every $i\in \{1, \cdots, N\}$ and every $x_i \in X_i$, one has:
\[J_i(\ovx_i, \ovx_{-i})\le J_i(x_i, \ovx_{-i}).\]
\end{definition}

The fundamental theorem of Nash states the following existence result for Nash equilibria:

\begin{theorem}[Existence of Nash equilibria]\label{Nash1}
If:
\begin{itemize}
\item $X_i$ is a convex compact subset of some locally convex Hausdorff topological vector space,
\item for every $i\in \{1,\cdots, N\}$, $J_i$ is continuous on $\oX$ and for every $x_{-i}\in X_{-i}$, $J_i(., x_{-i})$ is quasi-convex on $X_i$,
\end{itemize} 
then there exists at least one Nash equilibrium.
\end{theorem}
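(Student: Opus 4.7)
The plan is to deduce the existence of a Nash equilibrium from the Kakutani--Fan--Glicksberg fixed-point theorem applied to the best-response correspondence. Set $\bar{X}:=\prod_{i=1}^N X_i$; as a product of convex compact subsets of locally convex Hausdorff topological vector spaces, $\bar{X}$ is itself a convex compact subset of such a space. For each $i\in\{1,\ldots,N\}$ and each $x_{-i}\in X_{-i}$, define
\[BR_i(x_{-i}):=\argmin_{y_i\in X_i} J_i(y_i,x_{-i}),\]
and set $BR(x):=\prod_{i=1}^N BR_i(x_{-i})$ for $x=(x_1,\ldots,x_N)\in\bar{X}$. By construction, $\bar{x}\in\bar{X}$ is a Nash equilibrium if and only if $\bar{x}\in BR(\bar{x})$, so it suffices to produce a fixed point of the set-valued map $BR$.

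I would then verify the hypotheses of Kakutani--Fan--Glicksberg in turn. Nonemptiness of $BR_i(x_{-i})$ follows from continuity of $J_i$ and compactness of $X_i$: the function $y_i\mapsto J_i(y_i,x_{-i})$ attains its minimum on $X_i$. Convexity of $BR_i(x_{-i})$ is precisely where the quasi-convexity hypothesis enters: the set of minimizers of a quasi-convex function on a convex set equals the infimum sublevel set, hence is convex. Taking products, $BR(x)$ is nonempty and convex for every $x\in\bar{X}$.

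Closedness of the graph of $BR$ is the remaining point. I would argue it directly: if a net $(x^\alpha,y^\alpha)$ in the graph of $BR$ converges to some $(x,y)\in\bar{X}\times\bar{X}$, then for every $i$ and every $z_i\in X_i$,
\[J_i(y^\alpha_i,x^\alpha_{-i})\leq J_i(z_i,x^\alpha_{-i});\]
joint continuity of $J_i$ on $\bar{X}$ lets one pass to the limit and deduce $J_i(y_i,x_{-i})\leq J_i(z_i,x_{-i})$ for every $z_i\in X_i$, whence $y\in BR(x)$. The Kakutani--Fan--Glicksberg theorem then yields a fixed point $\bar{x}\in BR(\bar{x})$, which is the desired Nash equilibrium.

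The only genuine subtlety is that the $X_i$ are allowed to live in arbitrary locally convex Hausdorff topological vector spaces, and not only in finite-dimensional Euclidean spaces; this is what forces the use of the Fan--Glicksberg infinite-dimensional extension of Kakutani's theorem rather than Kakutani's original statement. It is also worth emphasizing that the quasi-convexity assumption on $J_i(\cdot,x_{-i})$ is used exactly once, namely to guarantee that $BR_i(x_{-i})$ is convex, and that no strict convexity or uniqueness of the best response is ever required.
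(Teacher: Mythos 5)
Your proof is correct and is the classical Kakutani--Fan--Glicksberg argument via the best-response correspondence; the paper states this theorem without proof as a standard result, and your argument is precisely the standard one it implicitly refers to. All the hypotheses are used where they should be: continuity and compactness for nonemptiness and the closed graph, quasi-convexity for convexity of the values, and the locally convex Hausdorff setting for the Fan--Glicksberg extension.
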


Since the convexity assumptions in the theorem above are quite demanding (and rule out the case of finite games, {\it i.e.} the case of finite strategy spaces $X_i$), Nash also introduced the mixed strategy extension. A mixed strategy for player $i$ is by definition probability measure $\pi_i\in \PP(X_i)$ and given a profile of mixed strategies $(\pi_1,\cdots \pi_N)\in \Pi_{i=1}^N \PP(X_i)$, the cost for player $i$ reads
\[\oJ_i (\pi_1,\cdots \pi_N):=\int_X J_i(x_1,\cdots x_N) \otimes_{j=1}^N \pi_j(\!\dd x_j).\]  
A Nash equilibrium in mixed strategies then is a Nash-equilibrium for the mixed strategy extension of the game. Since, as soon as the strategy spaces are metric compact spaces, this extension satisfies the continuity and convexity assumptions of Theorem \ref{Nash1}, Nash's second fundamental result states that:

\begin{theorem}[Existence of Nash equilibria in mixed strategies]\label{Nash2}
Any game with compact metric strategy spaces and continuous costs has at least one Nash equilibrium in mixed strategies. In particular, finite games admit Nash equilibria in mixed strategies.
\end{theorem}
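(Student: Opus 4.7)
The plan is to deduce Theorem \ref{Nash2} from Theorem \ref{Nash1} by checking that the mixed strategy extension satisfies the hypotheses of the latter, with $\PP(X_i)$ playing the role of the new strategy space for player $i$ and $\oJ_i$ playing the role of the new cost function.

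\textbf{Step 1 (topology and compactness of $\PP(X_i)$).} First I would endow $\PP(X_i)$ with the weak-$*$ topology inherited from $C(X_i)^*$. Since $X_i$ is a compact metric space, $C(X_i)$ is separable and the closed unit ball of $C(X_i)^*$ is weak-$*$ compact and metrizable by Banach--Alaoglu; $\PP(X_i)$ is a weak-$*$ closed, convex subset of this ball, hence a compact convex subset of the locally convex Hausdorff topological vector space $\M(X_i)=C(X_i)^*$. This handles the first hypothesis of Theorem \ref{Nash1}.

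\textbf{Step 2 (continuity of $\oJ_i$).} The key point is to show that $(\pi_1,\ldots,\pi_N)\mapsto \int_{\oX} J_i\,\otimes_j\pi_j(\!\dd x_j)$ is continuous on $\prod_j \PP(X_j)$ equipped with the product of weak-$*$ topologies. For test functions of the tensor form $g(x)=g_1(x_1)\cdots g_N(x_N)$ with $g_j\in C(X_j)$, Fubini yields $\int g\,\otimes_j\pi_j= \prod_j \int g_j\dd\pi_j$, which is manifestly continuous as a product of scalar continuous maps. By Stone--Weierstrass applied on the compact metric space $\oX$, finite linear combinations of such tensor products are dense in $C(\oX)$ in the uniform norm, so any continuous $J_i$ can be uniformly approximated by such elements. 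Since $\pi_j$ are probability measures, the error in $\oJ_i$ is controlled by the uniform error, and a standard $3\eps$ argument transfers continuity from tensors to $J_i$. This is the step requiring the most care.

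\textbf{Step 3 (quasi-convexity in $\pi_i$).} Fixing the mixed strategies $\pi_{-i}$ of the other players, the map
\[
\pi_i \mapsto \oJ_i(\pi_i,\pi_{-i})=\int_{X_i}\left(\int_{X_{-i}} J_i(x_i,x_{-i})\otimes_{j\neq i}\pi_j(\!\dd x_j)\right)\pi_i(\!\dd x_i)
\]
is \emph{affine} in $\pi_i$, hence \emph{a fortiori} quasi-convex. This is immediate from the definition and requires no further work.

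\textbf{Step 4 (conclusion).} All hypotheses of Theorem \ref{Nash1} being satisfied for the mixed extension, there exists a mixed Nash equilibrium. For the special case of a finite game, the discrete strategy spaces $X_i$ are trivially compact metric and every function $J_i$ on $\oX$ is continuous, so the general statement applies. I expect Step 2 to be the only genuine obstacle, as Steps 1, 3, and 4 are essentially definitional; the tensor-product density argument is the non-trivial ingredient that converts continuity of $J_i$ on $\oX$ into joint weak-$*$ continuity of $\oJ_i$.
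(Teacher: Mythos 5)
Your proposal is correct and follows exactly the route the paper indicates: the paper's one-sentence justification is precisely that the mixed-strategy extension satisfies the compactness, continuity and convexity hypotheses of Theorem \ref{Nash1}, which is what your Steps 1--3 verify in detail (with the Stone--Weierstrass density argument correctly identified as the only non-definitional point).
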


\section{Cournot-Nash equilibria}\label{CN}

Since the seminal work  of Aumann \cite{Aumann,Aumann2}, games with a continuum of players have received a lot of attention in game theory and economics. The  setting is the following: agents are characterised by a type $\theta$ belonging to some compact metric space $\Theta$. The type space $\Theta$ is also endowed with a Borel probability measure $\mu \in \PP(\Theta)$ which gives the distribution of types in the agents population. Each agent has to choose a strategy $x$ from some strategy space, $X$, again a compact metric space. The cost of one agent does not depend only on her type and the strategy she chooses but also on the other agents' choice through the probability distribution $\nu\in \PP(X)$ resulting from the whole population strategy choice. In other words, the cost is given by some function $F\in \C(\Theta\times X\times \PP(X))$ where $\PP(X)$ is endowed with the weak-$*$ topology (equivalently, the $1$-Wasserstein metric $\W_1$). In this framework, an equilibrium can conveniently be described by a joint probability measure on $\Theta\times X$ which gives the joint distribution of types and strategies and which is consistent with the cost-minimising behaviour of agents, this leads to the following definition:
\begin{definition}[Cournot-Nash equilibrium]\label{cnedefi}
A \emph{Cournot-Nash} equilibrium for $F$ and $\mu$ is a $\gamma\in \PP(\Theta\times X)$ such that:
\begin{itemize}
\item the first marginal of $\gamma$ is $\mu$: ${\Pi_{\Theta}}_\# \gamma=\mu$, 
\item $\gamma$ gives full mass to cost-minimising strategies:
\[\gamma\left(\left\{(\theta, x) \in \Theta\times X \; : F(\theta, x, \nu)=\min_{y\in X} F(\theta, y, \nu)\right\} \right)=1.\]
where $\nu$ denotes the second marginal of $\gamma$: ${\Pi_{X}}_\# \gamma=\nu$.  
\end{itemize}
A Cournot-Nash equilibrium $\gamma$ is called \emph{pure} if it  is  of the form $\gamma=({\rm{id}} ,T)_{\#}\mu$ for some Borel map $T$ : $\Theta\to X$ (that is agents with the same type use the same strategy).
\end{definition}

By a standard fixed-point argument, see Schmeidler \cite{Schmeidler} or Mas-Colell \cite{MasColell}, one obtains:
\begin{theorem}[Existence of Cournot-Nash equilibria]\label{existcne}
If $F\in \C(\Theta\times X\times \PP(X))$ where $\PP(X)$ is endowed with the weak-$*$ topology, there exists at least one Cournot-Nash equilibrium.
\end{theorem}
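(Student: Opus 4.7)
The plan is to realise Cournot-Nash equilibria as fixed points of a best-reply correspondence on the compact convex space $\PP(X)$ (equipped with $\W_1$, hence metrisable) and apply the Kakutani-Fan-Glicksberg fixed-point theorem. For each $\nu\in\PP(X)$, continuity of $F(\theta,\cdot,\nu)$ together with compactness of $X$ guarantees that the best-reply set
\[
B_\nu(\theta):=\argmin_{y\in X}F(\theta,y,\nu)
\]
is nonempty and compact, and joint continuity of $F$ on the compact product $\Theta\times X\times\PP(X)$ (hence uniform continuity) makes the value function $m(\theta,\nu):=\min_{y\in X}F(\theta,y,\nu)$ jointly continuous. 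I then define $\Phi:\PP(X)\to 2^{\PP(X)}$ by letting $\Phi(\nu)$ be the set of second marginals ${\Pi_X}_\#\gamma$ of probabilities $\gamma\in\PP(\Theta\times X)$ with ${\Pi_\Theta}_\#\gamma=\mu$ and $\gamma(\{(\theta,x):x\in B_\nu(\theta)\})=1$. A fixed point $\nu\in\Phi(\nu)$, together with any witnessing $\gamma$, yields precisely a Cournot-Nash equilibrium in the sense of Definition \ref{cnedefi}.

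Most of the hypotheses of the fixed-point theorem are routine. Nonemptiness of $\Phi(\nu)$ follows from a measurable selection theorem (for instance Kuratowski--Ryll-Nardzewski) applied to the upper-hemicontinuous, nonempty-compact-valued correspondence $\theta\mapsto B_\nu(\theta)$: picking a Borel selector $T_\nu$ with $T_\nu(\theta)\in B_\nu(\theta)$ produces the admissible plan $(\id,T_\nu)_\#\mu$, whose second marginal belongs to $\Phi(\nu)$. Convexity of $\Phi(\nu)$ is immediate since both the marginal constraint and the concentration condition are linear in $\gamma$. The ambient space $\PP(X)$ is convex and weak-$*$ compact.

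The main obstacle, and the only genuinely analytic step, is to verify the closed-graph property of $\Phi$. Take $\nu_n\to\nu$ and $\tilde\nu_n\to\tilde\nu$ in $\PP(X)$ with $\tilde\nu_n\in\Phi(\nu_n)$, witnessed by plans $\gamma_n\in\PP(\Theta\times X)$. The concentration condition rewrites as
\[
\int_{\Theta\times X}\bigl(F(\theta,x,\nu_n)-m(\theta,\nu_n)\bigr)\,\gamma_n(\!\dd\theta,\!\dd x)=0,
\]
with a nonnegative continuous integrand. By Prokhorov, along a subsequence $\gamma_n\weakstarto\gamma$ in $\PP(\Theta\times X)$, and $\gamma$ still has marginals $\mu$ and $\tilde\nu$. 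Uniform continuity of $F$ on the compact product, together with uniform continuity of $m$, forces $F(\cdot,\cdot,\nu_n)-m(\cdot,\nu_n)\to F(\cdot,\cdot,\nu)-m(\cdot,\nu)$ uniformly on $\Theta\times X$, so the limit integral
\[
\int_{\Theta\times X}\bigl(F(\theta,x,\nu)-m(\theta,\nu)\bigr)\,\gamma(\!\dd\theta,\!\dd x)=0
\]
still vanishes. The integrand being nonnegative and continuous, this forces $\gamma$ to be supported on $\{(\theta,x):x\in B_\nu(\theta)\}$, hence $\tilde\nu\in\Phi(\nu)$. Kakutani-Fan-Glicksberg then delivers a fixed point of $\Phi$, and the associated plan $\gamma$ is the sought Cournot-Nash equilibrium.
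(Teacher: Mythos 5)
Your proof is correct and is precisely the ``standard fixed-point argument'' that the paper invokes by citing Schmeidler and Mas-Colell rather than writing out: a best-reply correspondence on the compact convex set $\PP(X)$, nonempty-valued by measurable selection, convex-valued by linearity, and with closed graph obtained by rewriting the concentration condition as the vanishing of a nonnegative continuous integral and passing to the limit via uniform continuity of $F$ on the compact product, to which Kakutani--Fan--Glicksberg applies. All the technical steps (joint continuity of the value function, stability of the marginals under weak-$*$ limits, and the deduction that a measure annihilating a nonnegative continuous function is supported on its zero set) are handled correctly.
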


The continuity assumption above is actually rather demanding and the aim of Section~\ref{cnmk} is to present some results of \cite{BC,abgc2} for existence and sometimes uniqueness under less demanding regularity assumptions. 
\section{From Nash to Cournot-Nash}\label{ncn}
Our aim now is to explain how Cournot-Nash equilibria can be obtained as limits of Nash equilibria as the number of players tends to infinity. What follows is very much inspired by results from the Mean Field Games theory of Lasry and Lions, see in particular the notes of Cardaliaguet \cite{Carda}, following P.-L. Lions' lectures at Coll\`ege de France. The main improvement with respect to \cite{Carda} is that we deal with a situation with heterogeneous players and therefore cannot reduce the analysis to symmetric equilibria. \smallskip

Let $X$ and $\Theta$ be compact and metric spaces with respective distance $d_X$ and $d_\Theta$ and let $\Theta_N:=\{\theta_1, \cdots, \theta_N\}$ be a finite subset of the type space $\Theta$. Consider an $N$-person game where all the agents have the same strategy space $X$. We assume that the cost of player $i$, depends on her type $\theta_i\in \Theta_N$, her strategy $x_i$ and is symmetric with respect to the other players strategies $x_{-i}$: 
\[
J_i^N(x_i, x_{-i})=J^N(\theta_i, x_i, x_{-i})=J^N(\theta_i, x_i, (x_{\sigma(j)})_{j\neq i}) \quad \mbox{for every $\sigma\in S_{N-1}$, }
\]
where $S_{N-1}$ denotes the set of permutations of $\{1,\cdots,N\}\setminus\{i\}$. We moreover assume that there is a modulus of continuity $\omega$ such that for every for every $N$, every $(\theta_i, \theta_j) \in \Theta_N\times \Theta_N$, every  $(x_i, x_{-i})$ and $(y_i, y_{-i})$ in $X^N$, one has
\begin{multline}
\left\vert J^N(\theta_i, x_i, x_{-i}) -J^N(\theta_j, y_i, y_{-i})\right\vert   \le \omega\left(d_\Theta(\theta_i, \theta_j)\right) +  \omega \left(d_X(x_i, y_i)\right) \\
+\omega\left(\W_1\left( \frac{1}{N-1} \sum_{j\neq i} \delta_{x_j},  \frac{1}{N-1} \sum_{j\neq i} \delta_{y_j}\right)\right) \label{modulxmt}
\end{multline}
For further reference, let us observe, following \cite{Carda} that the $\W_1$ distance above is nothing but the quotient (by permutations) distance:
\begin{equation}\label{wassquotient}
\W_1\left( \frac{1}{N-1} \sum_{j\neq i} \delta_{x_j},  \frac{1}{N-1} \sum_{j\neq i} \delta_{y_j}\right)=\min_{\sigma\in S_{N-1}} \frac{1}{N-1}  \sum_{j\neq i} d_X(x_j, y_{\sigma(j)}).
\end{equation}
As in \cite[Theorem~2.1]{Carda}, under \pref{modulxmt}, one can extend $J^N$ to $\Theta\times X \times \PP(X)$ by the classical Mc~Shane construction {\it i.e.} by defining for every $(\theta,x, \nu)\in \Theta\times X\times \PP(X)$, the cost $F^N(\theta, x, \nu)$ by the formula:
\begin{equation*}
 F^N(\theta, x, \nu)=\inf_{(x_{-i}, \theta_i) \in X^{N-1}\times \Theta_N} \left\{J^N(\theta_i, x, x_{-i})+\omega(d_\Theta(\theta_i, \theta))+ \omega\left(\W_1\left(\nu, \frac{1}{N-1} \sum_{j\neq i}  \delta_{x_j}\right)\right)   \right\}. 
\end{equation*}
The previous assumptions are therefore equivalent to the fact that player $i$'s cost is given by
\[
 J_i^N(x_i, x_{-i})=J^N(\theta_i, x_i, x_{-i})=F^N\left(\theta_i, x_i, \frac{1}{N-1} \sum_{j\neq i} \delta_{x_j}\right)
 \]
with $F^N$ a sequence of uniformly equi-continuous functions on $\Theta\times X\times \PP(X)$.

\begin{theorem}[Pure Nash equilibria converge to Cournot-Nash equilibria]\label{fromntocn}
Let $\ovx^N=(x_1^N,\cdots, x_N^N)$ be a Nash equilibrium for the game above, and define:
\[\mu^N:=\frac{1}{N} \sum_{i=1}^N \delta_{\theta_i}, \quad \nu^N:=\frac{1}{N} \sum_{i=1}^N \delta_{x_i^N}\quad \mbox {and} \quad\gamma^N:=\frac{1}{N} \sum_{i=1}^N \delta_{(\theta_i, x_i^N)}\]
Assume that, up to the extraction of sub-sequences,
\[\mu^N \ws \mu, \quad \nu^N\ws \nu,\quad \gamma^N\ws \gamma\quad \mbox {and} \quad F^N \to F \mbox{ in } \C(\Theta\times X\times \PP(X)) \]
then $\gamma$ is a Cournot-Nash equilibrium for $F$ and $\mu$ in the sense of Definition \ref{cnedefi}.
\end{theorem}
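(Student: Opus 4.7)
The plan is to reduce the claim to proving that $\int \Phi \, \dd\gamma = 0$, where
\[
\Phi(\theta, x) := F(\theta, x, \nu) - \min_{y \in X} F(\theta, y, \nu).
\]
Since $F(\cdot, \cdot, \nu) \in \C(\Theta \times X)$ and $X$ is compact, $\Phi$ is continuous and non-negative on $\Theta \times X$, so vanishing of its $\gamma$-integral is equivalent to $\gamma$ being concentrated on the set $\{(\theta,x) : F(\theta, x, \nu) = \min_y F(\theta, y, \nu)\}$. The marginal condition ${\Pi_{\Theta}}_\#\gamma = \mu$ is immediate: the push-forward $\gamma \mapsto {\Pi_{\Theta}}_\#\gamma$ is continuous for the weak-$*$ topology, so $\gamma^N \ws \gamma$ yields ${\Pi_{\Theta}}_\#\gamma^N = \mu^N \ws {\Pi_{\Theta}}_\#\gamma$, and uniqueness of weak-$*$ limits gives the result.

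To prove $\int \Phi \, \dd\gamma^N \to 0$, I compare this integral with the analogous sum built from $F^N$ and the leave-one-out empirical measures $\nu_{-i}^N := \frac{1}{N-1}\sum_{j \ne i}\delta_{x_j^N}$. The Nash equilibrium property of $\ovx^N$ is exactly
\[
F^N(\theta_i, x_i^N, \nu_{-i}^N) = \min_{y \in X} F^N(\theta_i, y, \nu_{-i}^N), \qquad i = 1, \ldots, N,
\]
so averaging over $i$ yields $0$. On the other hand
\[
\int \Phi \, \dd\gamma^N = \frac{1}{N}\sum_{i=1}^N \Big[F(\theta_i, x_i^N, \nu) - \min_{y \in X} F(\theta_i, y, \nu)\Big].
\]
Each of the two summands differs from its $F^N$-$\nu_{-i}^N$ counterpart by at most $\|F^N - F\|_\infty + \omega\big(\W_1(\nu, \nu_{-i}^N)\big)$, with $\omega$ the common modulus of continuity furnished by \pref{modulxmt}. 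The first piece vanishes in the limit by $F^N \to F$ uniformly; for the second, the elementary bound $\W_1(\nu^N, \nu_{-i}^N) \le \diam(X)/N$ (obtained by transporting the $1/N$ mass at $x_i^N$ across $X$) together with $\W_1(\nu^N, \nu) \to 0$ gives $\max_{i} \W_1(\nu, \nu_{-i}^N) \to 0$, and therefore $\max_{i} \omega(\W_1(\nu, \nu_{-i}^N)) \to 0$. Consequently $\int \Phi \, \dd\gamma^N \to 0$.

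Finally, $\Phi$ is continuous on the compact space $\Theta \times X$ and $\gamma^N \ws \gamma$, so $\int \Phi \, \dd\gamma^N \to \int \Phi \, \dd\gamma$. Combining with $\Phi \ge 0$ forces $\gamma$ to be concentrated on the set of cost-minimising strategies, which is exactly the Cournot-Nash property. The most delicate point in this scheme is the uniform-in-$i$ control of $\W_1(\nu, \nu_{-i}^N)$ together with the availability of a common modulus of continuity for the family $\{F^N\}$: without both ingredients, one could not pass from the $N$ distinct Nash inequalities, each posed with its own leave-one-out measure $\nu_{-i}^N$, to a single assertion about the limiting measure $\nu$.
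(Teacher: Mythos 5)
Your proof is correct and follows essentially the same route as the paper's: both rest on the Nash inequalities for the leave-one-out measures $\nu_{-i}^N$, the bound $\W_1(\nu_{-i}^N,\nu^N)\le \diam(X)/N$, the common modulus of continuity of the $F^N$, and averaging over $i$ before passing to the weak-$*$ limit. The only (cosmetic) difference is that you swap $F^N(\cdot,\cdot,\nu_{-i}^N)$ for $F(\cdot,\cdot,\nu)$ in one step and phrase the conclusion as $\int \Phi\,\dd\gamma=0$ for the non-negative continuous excess-cost function, whereas the paper first derives an integral inequality at level $N$ and then lets $N\to\infty$.
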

\begin{proof}
First set 
$$\tiln_i^N:=  \frac{1}{N-1} \sum_{j\neq i}  \delta_{x_j^N}\;.$$ 
Since $\ovx^N=(x_1^N,\cdots, x_N^N)$ is a Nash equilibrium we have,
\[\forall y \in X,\quad F^N(\theta_i, x_i^N, \tiln_i^N)\le F^N(\theta_i, y, \tiln_i^N)\;.\]
Hence, using $\W_1(\tiln_i^N, \nu^N)\le \diam(X)/N$, we obtain
\[\forall y \in X,\quad F^N(\theta_i, x_i^N, \nu^N)\le F^N(\theta_i, y, \nu^N)+\eps_N\;,\]
where $\eps_N:=2 \omega( \diam(X)/N)$ tends to $0$ as $N$ goes to $\infty$. Summing over $i$ and dividing by $N$ gives
\[\int_{\Theta\times X} F^N(\theta, x, \nu^N) \;\gamma^N(\!\dd\theta, \dd x) \le  \int_\Theta \min_{y\in X} F^N(\theta, y, \nu^N) \;\mu^N(\!\dd\theta)+\eps_N.\] 
Since $F^N(.,., \nu^N)$ converges in $\C(\Theta\times X)$ to $F(.,., \nu)$, letting $N$ tend to $\infty$ in the previous inequality, we obtain:
\[\int_{\Theta\times X} F(\theta, x, \nu) \;\gamma(\!\dd\theta, \dd x) \le  \int_\Theta \min_{y\in X} F(\theta, y, \nu) \;\mu(\!\dd\theta)\;.\]
Since $\gamma$ obviously has marginals $\mu$ and $\nu$, we readily deduce that $\gamma$ is a Cournot-Nash equilibrium for $F$ and $\mu$.
\end{proof}
As already discussed above, not all the games have a pure Nash equilibrium. So that the previous result might be of limited interest. This is why, we now consider the mixed strategy extension which always has Nash equilibria  as recalled in Section~\ref{Nash}.
\begin{theorem}[Nash equilibria in mixed strategies converge to Cournot-Nash equilibria]\label{mixcnn}
If Assumption \pref{modulxmt} holds for $\omega(t)=Kt$, then the conclusions of Theorem~\ref{fromntocn} applies to the extension in mixed strategies with the extended cost
\[\oJ^N(\theta_i, \pi_i, \pi_{-i}):=\int_{X^N} J^N(\theta_i, x, x_{-i})\; \pi_i(\!\dd x) \otimes_{j\neq i} \pi_j(\!\dd x_j)\;.\]
\end{theorem}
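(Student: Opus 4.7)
The plan is to follow closely the proof of Theorem \ref{fromntocn}, the only extra work being to average over the randomness induced by mixed strategies via the Lipschitz hypothesis $\omega(t)=Kt$. Given a Nash equilibrium in mixed strategies $(\pi_1^N,\ldots,\pi_N^N)\in\PP(X)^N$, I introduce
\[
\nu^N:=\frac{1}{N}\sum_{i=1}^N\pi_i^N,\quad \tiln_i^N:=\frac{1}{N-1}\sum_{j\neq i}\pi_j^N,\quad \gamma^N:=\frac{1}{N}\sum_{i=1}^N\delta_{\theta_i}\otimes\pi_i^N,
\]
and set $G_i^N(\theta_i,y):=\oJ^N(\theta_i,\delta_y,\pi_{-i}^N)$. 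Testing the Nash inequality against the deviating pure strategy $\pi=\delta_y$ gives
\[
\int_X G_i^N(\theta_i,x)\,\pi_i^N(\!\dd x)\le G_i^N(\theta_i,y)\qquad\forall y\in X.
\]

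The heart of the argument is to replace $G_i^N(\theta_i,y)$ by $F^N(\theta_i,y,\nu^N)$ up to a vanishing error. Writing
\[
G_i^N(\theta_i,y)=\int_{X^{N-1}}F^N\!\left(\theta_i,y,\tfrac{1}{N-1}\sum_{j\neq i}\delta_{x_j}\right)\otimes_{j\neq i}\pi_j^N(\!\dd x_j),
\]
the Lipschitz form of \pref{modulxmt} yields
\[
\left|G_i^N(\theta_i,y)-F^N(\theta_i,y,\nu^N)\right|\le K\!\int_{X^{N-1}}\!\W_1\!\left(\tfrac{1}{N-1}\sum_{j\neq i}\delta_{x_j},\tiln_i^N\right)\otimes_{j\neq i}\pi_j^N(\!\dd x_j)+K\,\W_1(\tiln_i^N,\nu^N).
\]
The second term is $O(1/N)$. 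For the first, I cover $X$ with finitely many Borel sets $A_1,\ldots,A_{K_\eps}$ of diameter at most $\eps$ and use $\W_1(\mu_1,\mu_2)\le\eps+\diam(X)\sum_k|\mu_1(A_k)-\mu_2(A_k)|$; the independence of the samples $x_j\sim\pi_j^N$ and the variance estimate $\Var(\un_{A_k}(x_j))\le 1/4$ then bound the expectation by $\eps+\diam(X)\,K_\eps\,(N-1)^{-1/2}$. A diagonal choice $\eps=\eps_N\downarrow 0$ provides a single error $\eta_N\to 0$, independent of $i$, $y$ and of the entire profile, such that $|G_i^N(\theta_i,y)-F^N(\theta_i,y,\nu^N)|\le\eta_N$.

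Plugging this bound into the Nash inequality produces
\[
\int_X F^N(\theta_i,x,\nu^N)\pi_i^N(\!\dd x)\le F^N(\theta_i,y,\nu^N)+2\eta_N\qquad\forall y\in X;
\]
minimising in $y$, summing over $i$ and dividing by $N$ gives
\[
\int_{\Theta\times X}F^N(\theta,x,\nu^N)\,\gamma^N(\!\dd\theta,\dd x)\le\int_\Theta\min_{y\in X}F^N(\theta,y,\nu^N)\,\mu^N(\!\dd\theta)+2\eta_N,
\]
and the conclusion follows by passing to the limit exactly as in the proof of Theorem \ref{fromntocn}, using the uniform convergence $F^N\to F$ together with $\mu^N\ws\mu$, $\nu^N\ws\nu$ and $\gamma^N\ws\gamma$.

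The hard part is the intermediate concentration step: one must control $\W_1$ between the empirical measure of independent but non-identically distributed samples and its mean $\tiln_i^N$, uniformly in the whole profile $(\pi_j^N)_{j\neq i}$ and in the type. The Lipschitz assumption $\omega(t)=Kt$ is exactly what makes it possible to transfer this $L^1$-type bound directly into a uniform error on $G_i^N-F^N$; for a generic modulus one would instead need concavity of $\omega$ combined with Jensen's inequality and a sharper quantitative rate, which is the structural reason why linearity of $\omega$ appears in the statement.
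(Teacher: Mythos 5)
Your proof is correct, but it takes a genuinely different route from the paper's. The paper treats the mixed extension as a new $N$-player game whose common strategy space is the compact metric space $(\PP(X),\W_1)$, and its entire proof consists of the stated Lemma: showing that $\oJ^N$ satisfies the structural assumption \pref{modulxmt} on $\Theta\times\PP(X)^N$ (with $d_X$ replaced by $\W_1$ and the quotient distance by $\min_\sigma\frac{1}{N-1}\sum_{j\neq i}\W_1(\pi_j,\eta_{\sigma(j)})$), after which Theorem~\ref{fromntocn} is invoked verbatim on the lifted game; the limit object then naturally lives on $\Theta\times\PP(X)$ (equivalently, one converges to an equilibrium of the lifted game). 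You instead stay on $\Theta\times X$: you average the mixed strategies into $\nu^N=\frac1N\sum_i\pi_i^N$ and $\gamma^N=\frac1N\sum_i\delta_{\theta_i}\otimes\pi_i^N$, and the new ingredient is the quantitative law of large numbers for empirical measures in $\W_1$ (covering $X$ by sets of diameter $\eps$, variance bound $1/4$ for the indicators, diagonal choice $\eps_N\downarrow0$), which lets you replace $\oJ^N(\theta_i,\delta_y,\pi^N_{-i})$ by $F^N(\theta_i,y,\nu^N)$ up to a uniform $\eta_N\to0$; from there the argument is a copy of the proof of Theorem~\ref{fromntocn}. Each approach has its merits: the paper's is shorter and purely structural (no probabilistic estimate), but yields the conclusion on the lifted space and leaves implicit the passage back to a distribution of play on $X$; yours is longer but delivers directly a Cournot-Nash equilibrium for $F$ and $\mu$ on $\Theta\times X$, which is arguably the statement one wants, and your closing remark correctly identifies where linearity (or at least concavity, via Jensen) of $\omega$ enters — in transferring the $L^1$ concentration bound through the cost. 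All the individual steps check out: the linearity of $\oJ^N$ in $\pi_i$ justifies testing against $\delta_y$, the McShane extension with $\omega(t)=Kt$ is $K$-Lipschitz in $\nu$ for $\W_1$, $\W_1(\tiln_i^N,\nu^N)=O(1/N)$, and the covering estimate is a valid (slightly lossy) upper bound for $\W_1$.
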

This theorem is a direct consequence of the fact that for uniformly Lipschitz $J^N$, the extension in mixed strategies still satisfies \pref{modulxmt}: 
\begin{lemma} Under the Assumptions of Theorem~\ref{mixcnn}, the extended cost $ \oJ^N$ satisfies
  \begin{multline*}
\left\vert \oJ^N(\theta_i, \pi_i, \pi_{-i}) -\oJ^N(\theta_j, \eta_i, \eta_{-i})\right\vert \le  K\,d_\Theta\left(\theta_i, \theta_j\right) +  K \, \W_1\left(\pi_i, \eta_i\right)\\
+K \min_{\sigma\in S_{N-1}} \frac{1}{N-1}  \sum_{j=1, \; j\neq i}^N  \W_1(\pi_j, \eta_{\sigma(j)}) \;.
\end{multline*}
for every $N \in \N$, $(\theta_i, \theta_j) \in \Theta_N^2$, $(\pi_i, \pi_{-i})$ and $(\eta_i, \eta_{-i})$ in $\PP(X)^N$.
\end{lemma}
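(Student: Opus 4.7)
The plan is to prove the bound by constructing a single coupling on $X^N\times X^N$ that witnesses all three terms on the right-hand side simultaneously, and then integrating the pointwise inequality \pref{modulxmt} against it. The key idea is that the mixed cost $\oJ^N$ inherits the symmetry of $J^N$ in the $(N-1)$ opponent variables, so we have the freedom to relabel the $\eta_k$'s by any permutation before comparing to the $\pi_k$'s.

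Concretely, I would begin by picking a permutation $\sigma^\ast \in S_{N-1}$ that achieves the minimum
$\min_\sigma \frac{1}{N-1}\sum_{j\neq i}\W_1(\pi_j,\eta_{\sigma(j)})$, together with an optimal $\W_1$-transport plan $\gamma_i \in \Pi(\pi_i,\eta_i)$ for the active coordinate and optimal plans $\gamma_j \in \Pi(\pi_j,\eta_{\sigma^\ast(j)})$ for each $j\neq i$. Form the product plan $\Gamma := \gamma_i \otimes \bigotimes_{j\neq i} \gamma_j$ on $X^N\times X^N$. Since $J^N$ is symmetric in its last $N-1$ arguments, we have $\oJ^N(\theta_j,\eta_i,\eta_{-i})=\oJ^N(\theta_j,\eta_i,\eta_{\sigma^\ast(-i)})$, so the difference $\oJ^N(\theta_i,\pi_i,\pi_{-i})-\oJ^N(\theta_j,\eta_i,\eta_{-i})$ can be rewritten as $\int_{X^N\times X^N}[J^N(\theta_i,x_i,x_{-i})-J^N(\theta_j,y_i,y_{-i})]\,d\Gamma(x,y)$.

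Next, apply \pref{modulxmt} with $\omega(t)=Kt$ pointwise under the integral, and bound the empirical-measure term trivially by using the identity pairing $j\mapsto j$ rather than the minimum over $S_{N-1}$:
\[
\W_1\!\left(\frac{1}{N-1}\sum_{j\neq i}\delta_{x_j},\,\frac{1}{N-1}\sum_{j\neq i}\delta_{y_j}\right)\le \frac{1}{N-1}\sum_{j\neq i}d_X(x_j,y_j).
\]
Then integrate termwise against $\Gamma$. The constant term gives $K\,d_\Theta(\theta_i,\theta_j)$; the second uses only the marginal $\gamma_i$ and yields $K\,\W_1(\pi_i,\eta_i)$; the third, thanks to the product structure of $\Gamma$, splits as $\frac{K}{N-1}\sum_{j\neq i}\int d_X(x_j,y_j)\,d\gamma_j = \frac{K}{N-1}\sum_{j\neq i}\W_1(\pi_j,\eta_{\sigma^\ast(j)})$, which by the choice of $\sigma^\ast$ equals the minimum in the statement.

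The only subtle point, and the one I expect to be the main obstacle to explain cleanly, is the legitimacy of permuting $\eta_{-i}$ by $\sigma^\ast$ before transporting: this relies on the symmetry hypothesis built into $J^N$ in the definition of the game, which transfers to $\oJ^N$ because in the integral defining $\oJ^N$ the measures $\pi_k$ (and $\eta_k$) for $k\neq i$ appear only through the product $\bigotimes_{k\neq i}\pi_k$, invariant under permutations. Once this invariance is stated, the product construction of $\Gamma$ automatically aligns each $\pi_j$ with its chosen partner $\eta_{\sigma^\ast(j)}$, and the sub-additive bound \pref{wassquotient} is used in the easy (upper-bound) direction rather than with its sharp minimizing permutation, since the minimum has already been absorbed into the definition of $\sigma^\ast$.
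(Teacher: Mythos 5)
Your proof is correct and uses essentially the same ingredients as the paper's: optimal $\W_1$ plans assembled into a product coupling, the permutation invariance of $\oJ^N$ in the opponents' measures to align $\pi_j$ with $\eta_{\sigma^\ast(j)}$, and the identity-pairing upper bound on the empirical-measure $\W_1$ term. The only (harmless) difference is organizational: the paper splits the estimate into three successive one-argument perturbations via the triangle inequality, whereas you integrate the full pointwise bound \pref{modulxmt} once against a single product coupling.
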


\begin{proof}
The inequality 
$$\left\vert \oJ^N(\theta_i, \pi_i, \pi_{-i}) -\oJ^N(\theta_j, \pi_i, \pi_{-i})\right\vert \le  K\,d_\Theta\left(\theta_i, \theta_j\right)$$
is obvious by integration. Let $\Gamma\in \Pi(\pi_i, \eta_i)$ be such that 
$$\int_{X\times X} d_X(x,y) \,\Gamma(\!\dd x, \dd y)=\W_1(\pi_i, \eta_i)\;.$$
We have:
\begin{multline*}
\left\vert \oJ^N(\theta_i, \pi_i, \pi_{-i}) -\oJ^N(\theta_i, \eta_i, \pi_{-i})\right\vert \leq \\
 \int_{X^{N-1}}  \left(   \int_{X\times X} \left\vert J^N(\theta_i, x, x_{-i})-J^N(\theta_i, y, x_{-i}) \right\vert \Gamma(\!\dd x, \dd y)\right) \otimes_{j\neq i} \pi_j(\!\dd x_j)\\
\le K \int_{X\times X} d_X(x,y) \,\Gamma(\!\dd x, \dd y)=K \,\W_1(\pi_i, \eta_i).
\end{multline*}
Let $\sigma_\in S_{N-1}$ and for $j\neq i$, let then $\Gamma_j \in \Pi(\pi_j, \eta_{\sigma(j)})$ be such that 
\[\int_{X\times X} d_X(x,y)\, \Gamma_j(\!\dd x, \dd y)=\W_1(\pi_j, \eta_{\sigma(j)}).\]
Then define $\Gamma \in \PP(X^{N-1}\times X^{N-1})$ by
\[\int_{X^{N-1}\times X^{N-1}} \varphi \Gamma:=\int_{X^{N-1}\times X^{N-1}} \varphi(x_{-i}, y_{-i}) \otimes_{j\neq i} \Gamma_j(\!\dd x_j, \dd y_j)\;.\]
We then have by symmetry, definition of $\Gamma$ and \pref{modulxmt}:
\begin{eqnarray*}
\left\vert \oJ^N(\theta_i, \pi_i, \pi_{-i})-\oJ^N (\theta_i, \pi_i, \eta_{-i}) \right\vert &=&\left\vert \oJ^N(\theta_i, \pi_i, \pi_{-i})-\oJ^N (\theta_i, \pi_i, {\eta_{\sigma(j)}}_{j\neq i} )\right\vert\\
&=&\left\vert \int (J^N( \theta_i, x, x_{-i})- J^N( \theta_i, x, y_{-i}) ) \,\pi_i(\!\dd x) \,\Gamma(\!\dd x_{-i}, \dd y_{-i}) \right\vert \\
&\le& \frac{K}{N-1} \sum_{j\neq i}  \int_{X\times X}  d_X(x_j, y_j) \,\Gamma_j(\!\dd x_j, \dd y_j) \\
&=&  \frac{K}{N-1}\sum_{j\neq i} \W_1(\pi_j, \eta_{\sigma(j)})
\end{eqnarray*}
and since $\sigma$ is arbitrary, we have
\[\left\vert \oJ^N(\theta_i, \pi_i, \pi_{-i})-\oJ^N (\theta_i, \pi_i, \eta_{-i}) \right\vert \leq K \min_{\sigma\in S_{N-1}} \frac{1}{N-1}  \sum_{j=1, \; j\neq i}^N  \W_1(\pi_j, \eta_{\sigma(j)}) \;.\]
which ends the proof.
\end{proof}
\section{Solving Cournot-Nash via the Monge-Kantorovich problem}\label{cnmk}
In this final section, we give a brief and informal account of our recent works \cite{BC,abgc2} and the computation of Cournot-Nash equilibria in the separable case {\it i.e.} the case where, using the notations of Section~\ref{CN}:
\begin{equation}\label{sepform}
F(\theta, x, \nu)=c(\theta, x)+ V(\nu, x).
\end{equation}
All the strategic interactions are then captured by the term  $V(\nu, x)$. Typical effects that have to be taken into account are: 
\begin{itemize}
\item \emph{congestion  (or repulsive) effects}: frequently played strategies are costly. This can be captured by a term of the form $x \mapsto f(x, \nu(x))$ where slightly abusing notations $\nu$ is identified with its density with respect to some reference measure $m_0$ according to which congestion is measured. The congestion effect leads to dispersion in the strategy distribution $\nu$,
\item \emph{positive interactions (or attractive effects)} like mimetism: such effects can be captured by a term like $$\int_X \phi(x,y) \,\nu(\!\dd y)$$ with $\phi$ minimal when $x=y$. The positive interaction effect leads to the concentration of the strategy distribution. 
\end{itemize}

In the sequel we shall therefore consider a term $V(\nu,x)$ which reflects both effects. Cournot-Nash equilibria have to balance the attractive and repulsive effects in some sense, but their structure is not easy to guess when these two opposite effects are present. As a benchmark, one can think of $\nu$ being absolutely continuous with respect to some reference measure $m_0$ and a potential of the form
\begin{equation}\label{sepform2}
V(\nu,x)=f(x, \nu(x))+\int_X \phi(x,y) \,\nu(\!\dd y)
\end{equation}
where $f(x,.)$ is increasing and $\phi$ is some continuous interaction kernel. Note that congestion terms, though natural in applications, pose important mathematical difficulties since they are not regular. In particular, in the presence of such terms one cannot use the existence result stated in Theorem~\ref{existcne}. \smallskip

In\cite{BC}, we emphasized that the structure of equilibria, which we take pure to simplify the exposition {\it i.e.} we look for a strategy map $T$ from $\Theta$ to $X$, is as follows: 
\begin{itemize}
\item players of type $\theta$ choose a cost-minimising strategy $T(\theta)$ {\it i.e.} 
$$\forall x \in X,\quad c(\theta, T(\theta))+V(\nu, T(\theta))\le c(\theta, x)+V(\nu, x)\;.$$
$T$ is called \emph{best-reply map}, 
\item $\nu$ is the image of $\mu$ by $T$:  $\nu:=T_\#\mu$. This is a mass conservation condition. 
\end{itemize}

The aim is to compute $\nu$ and $T$ from the previous two conditions. In general, these two highly nonlinear conditions are difficult to handle, but in \cite{BC,abgc2} we identified some cases in which they can be greatly simplified. We then obtained some uniqueness results and/or numerical methods to compute equilibria. Roughly speaking, in an euclidean setting for instance, the \emph{best-reply} condition leads to an optimality condition relating $\nu$ and $T$ whereas the mass conservation condition leads to a Jacobian equation. Combining the two conditions therefore leads to a certain nonlinear partial differential equation of Monge-Amp\`ere type.\smallskip

We shall review some tractable examples in the next paragraphs, but  wish to observe that in the separable setting, Cournot-Nash equilibria are very much related to optimal transport. More precisely, for $\nu\in \PP(X)$, let $\Pi(\mu, \nu)$ denote the set of probability measures on $\Theta \times X$ having $\mu$ and $\nu$ as marginals and let $\W_c(\mu, \nu)$ be the least cost of transporting $\mu$ to $\nu$ for the cost $c$ {\it i.e.} the value of the Monge-Kantorovich optimal transport problem:
\[
\W_c(\mu, \nu):=\inf_{\gamma\in \Pi(\mu, \nu)}  \int_{\Theta \times X} c(\theta, x)  \,\gamma(\!\dd\theta, \dd x).
\]
It is obvious that the optimal transport  problem above admits solutions since the admissible set is convex and weakly-$*$ compact. Let us then denote by $\Pi_o(\mu,\nu)$ the set of optimal transport plans {\it i.e.}
\[
\Pi_o(\mu,\nu):=\left\{\gamma\in \Pi(\mu, \nu) \; : \; \int_{\Theta \times X} c(\theta,x)   \, \gamma(\!\dd\theta, \dd x)=\W_c(\mu, \nu)\right\}\;.
\]

The link between Cournot-Nash equilibria and optimal transport is then based on the following straightforward observation: if $\gamma$ is a Cournot-Nash equilibrium and $\nu$ denotes its second marginal then $\gamma\in \Pi_o(\mu, \nu)$, see \cite[Lemma~2.2]{BC} for details. Hence if $\gamma$ is pure then it is induced by an optimal transport. Optimal transport theory has developed extremely fast in the last decades  and one can therefore naturally take advantage of this powerful theory, for which we refer to the books of Villani \cite{villani,villani2}, to study Cournot-Nash equilibria.

\subsection{Dimension one}

Assume that both $\Theta$ and $X$ are intervals of the real line, say $[0,1]$. The two equations of cost minimisation and mass conservation can be simplified to characterise Cournot-Nash equilibria. This is why under quite general assumptions on $c$, $f$ and $\phi$ we prove in \cite[Section~6.3]{abgc2} that the optimal transport map $T$ associated to a Cournot-Nash equilibrium satisfies a certain nonlinear and non-local differential equation. For instance when $f=\log(\nu)$, the map $T$ corresponding to an equilibrium solves:
\begin{equation}\label{odeforT1}
T'(\theta)=C \,\mu(\theta)\, \exp\left(-\int_0^\theta \partial_\theta c(s, T(s)) \dd s+c(\theta,T(\theta))+\int_{0}^1 \phi\left(T(\theta), T(\beta) \right) \,\mu(\!\dd\beta) \right)
\end{equation}
supplemented with the boundary conditions $T(0)=0$ and  $T(1)=1$.
The case of a power congestion function $f(\nu)=\nu^{\alpha}$ with $\alpha >0$, is more involved since in this case, equilibrium densities may vanish. In this case,  one cannot really write a differential equation for the transport map but working with the generalised inverse of the transport map still gives a tractable equation for equilibria. Hence equilibria can be computed numerically, see \cite{abgc2}, as shown in Figure~\ref{fig:1d}.
\begin{figure}[h!]
 \begin{minipage}[t]{.49\linewidth}
\centering\epsfig{figure=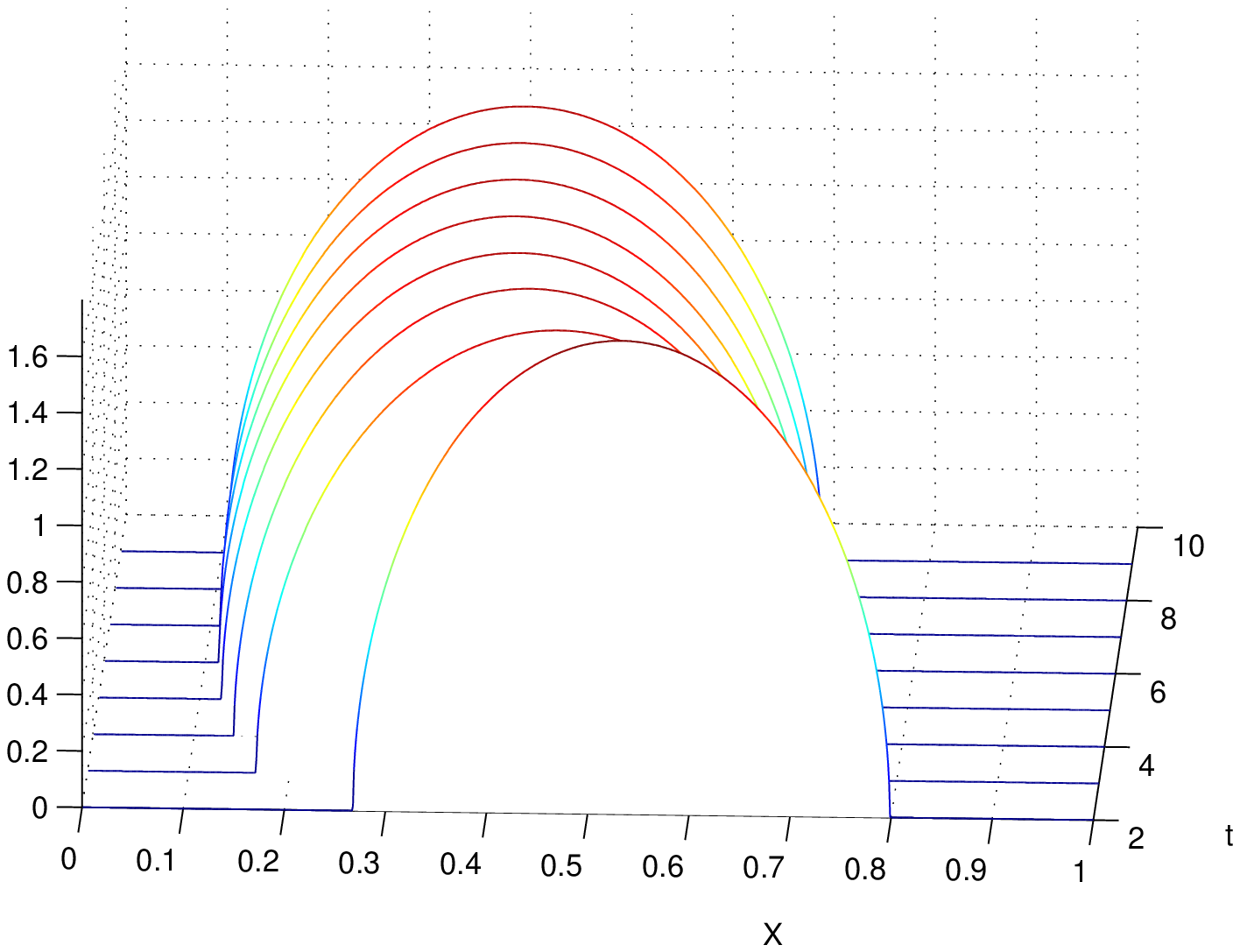,width=6.4cm}
 \end{minipage} \hfill
\begin{minipage}[t]{.49\linewidth}
\centering\epsfig{figure=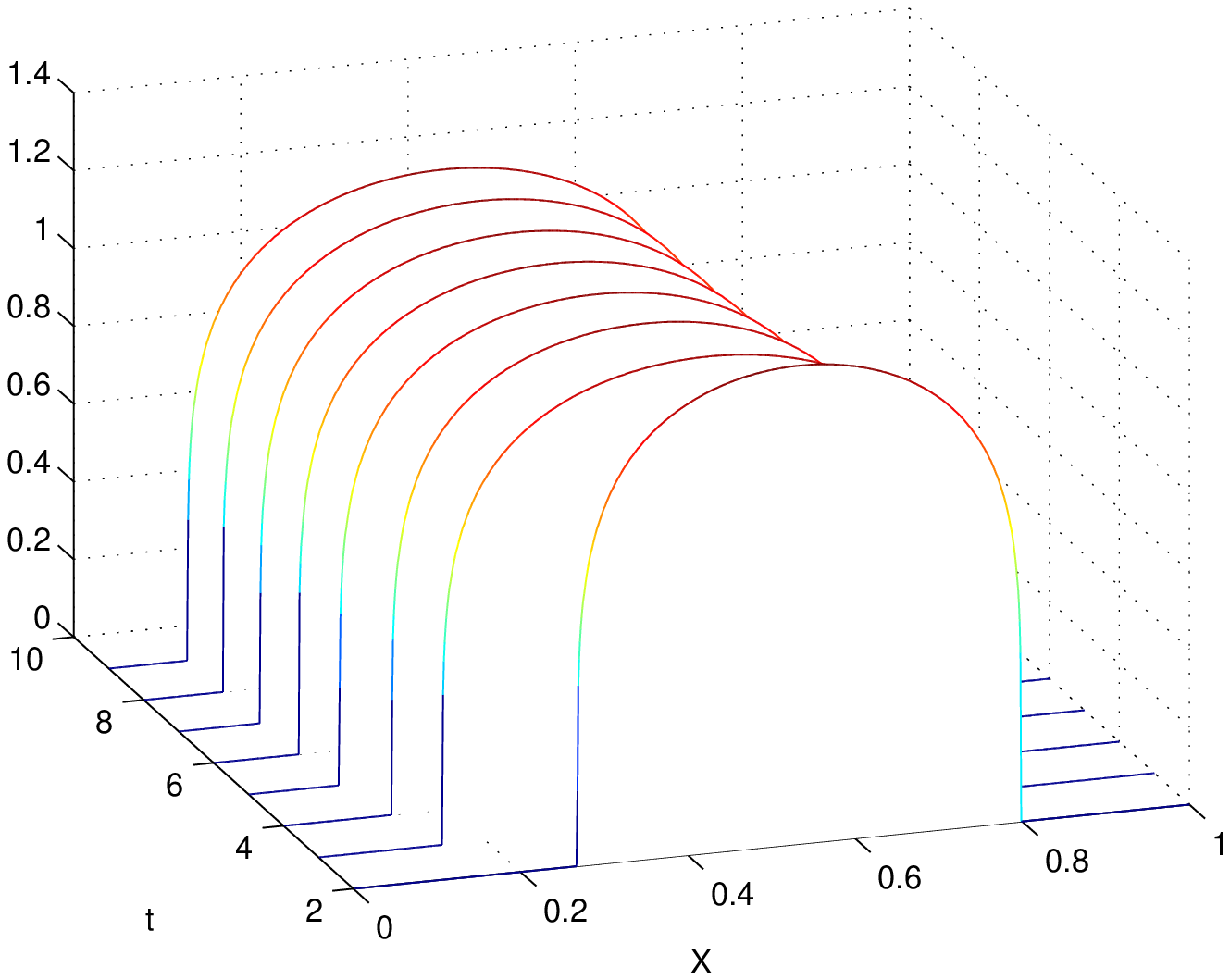,width=6.4cm}
 \end{minipage} 
\caption{\small The convergence to the distribution of actions $\nu$ at equilibrium in the case of $f(\nu)=\nu^{\alpha}$ with $\alpha=2$ on the left and $\alpha=5$ on the right. Here $\phi(x,y):=10(2x-y-0.4)^2$, $c(\theta, x)=|\theta-x|^4/4$ and $\mu$ is uniform on $[0,1]$.}\label{fig:1d}
\end{figure}

Let us mention that in higher dimensions, for a quadratic $c$ and a logarithmic $f$, finding an equilibrium amounts to solve the following Monge-Amp\`ere equation, see \cite[Section~4.3]{BC}:
\begin{multline}\label{mongeampeq}
\mu(\theta)=\det(D^2u(\theta)) \exp\left(-\frac{1}{2} \vert \nabla u(\theta)\vert^2 +\theta\cdot \nabla u(\theta)-u(\theta)\right) \times  \\
\exp\left(-\int_{\Theta} \phi\left(\nabla u(\theta), \nabla u(\theta_1)\right)\dd \mu(\theta_1)\right).
\end{multline}
\subsection{Variational approach}
In \cite{BC}, we observe that whenever $V$ takes the form \pref{sepform2} with a \emph{symmetric}  kernel $\phi$ then it is the first variation of the energy
\begin{equation*}
J(\nu):=\int_X F(x, \nu(x)) \,m_0(\!\dd x)+ \frac{1}{2} \int_{X\times X}  \phi(x,y)\, \nu(\!\dd x)\, \nu(\!\dd y)
\quad \mbox{with} \quad F(x, \nu):=\int_0^\nu f(x, s) \dd s\;.
\end{equation*}
We can thus prove that the condition defining equilibria is in fact the Euler-Lagrange equation for the variational problem
\begin{equation}\label{variationalpb}
\inf_{\nu\in \PP(X)} \left\{ \W_c(\mu, \nu)+J(\nu)\right\}\;.
\end{equation} 
More precisely if $\nu$ solves \pref{variationalpb} and $\gamma\in \Pi_o(\mu, \nu)$ then $\gamma$ actually is a Cournot-Nash equilibrium. Not only this approach gives new existence results but also, and more surprisingly, uniqueness in some cases since the functional above may have hidden convexity properties. In dimension one, under suitable convexity assumptions the variational problem above can be solved numerically in an efficient way as illustrated by Figure~\ref{fig:jko}.  
\begin{figure}[h!]
\centering\epsfig{figure=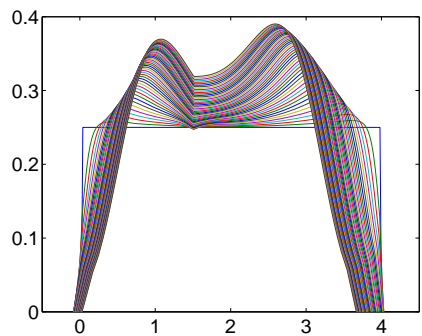,width=6.4cm}
\caption{\small The distribution of actions $\nu$ at equilibrium in the case $\phi(x,y)=(x-1.6)^4/4+(x-y)^2/200$ and $\mu$ is a distribution of support $[0.5,0.6] \cup [3.7,3.8]$. The fact that $\mu$ has a disconnected support results in a corner in the density $\nu$.}\label{fig:jko}
\end{figure}

\subsection{A two-dimensional case by best-reply iteration}

For some specific forms in \pref{sepform} where $c$ and  the potential $V$  have strong convexity  properties with respect to $x$, one can look for equilibria directly by \emph{best-reply iteration}. For a fixed strategy distribution $\nu$, each type $\theta$ has a unique optimal strategy $T(\nu, \theta)$, transporting $\mu$ by this best-reply map then gives a new measure $T(\nu, .)_\# \mu$, an equilibrium corresponds to a fixed-point  {\it i.e.} a solution of $T(\nu, .)_\# \mu=\nu$. For a quadratic $c$, the previous equation simplifies a lot and, in \cite{abgc2},  we obtained conditions under which the best-reply map is a contraction for the Wasserstein metric which guarantees  uniqueness of the equilibrium and convergence of  best-reply iterations. See Figure~\ref{fig:2d} for a two-dimensional example.  
\begin{figure}[h!]
 \begin{minipage}[t]{.49\linewidth}
\centering\epsfig{figure=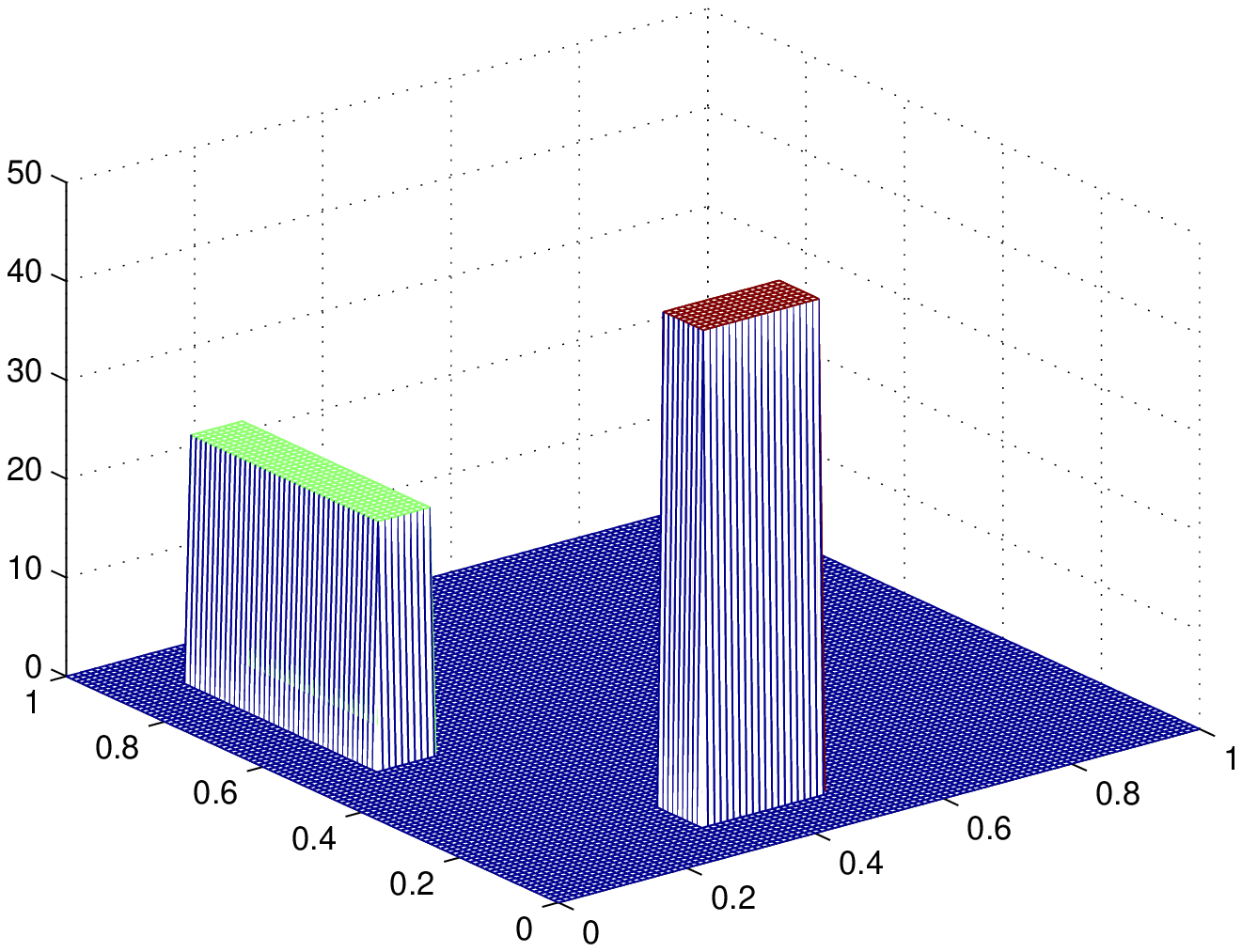,width=6.4cm}
 \end{minipage} \hfill
\begin{minipage}[t]{.49\linewidth}
\centering\epsfig{figure=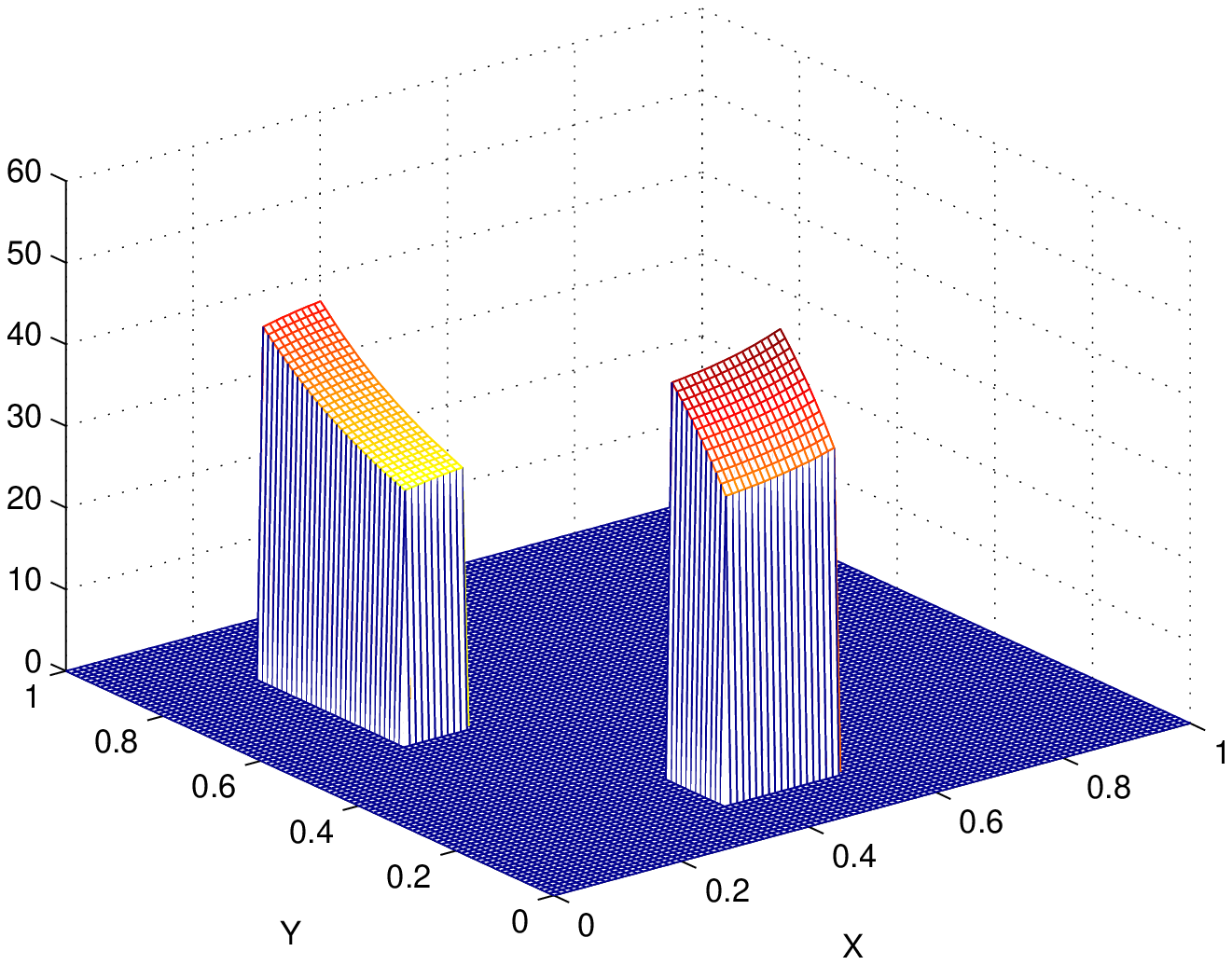,width=6.4cm}
 \end{minipage} 
\caption{\small In the case $\phi(x,y)=|x-(1,1)|^2+|x-y|^4/100$ and $\mu$ is made of two uniform distributions of support $[1/10,2/10]\times[3/10,5/10] \cup [5/10,9/10]\times[1/10,2/10]$: the distribution $\mu$ on the left and the distribution of actions $\nu$ at equilibrium on the right.}\label{fig:2d}
\end{figure}

{\bf{Acknowledgements.}} The authors gratefully acknowledge the support of INRIA and the ANR through the Projects ISOTACE (ANR-12-MONU-0013) and OPTIFORM (ANR-12-BS01-0007).

\bibliographystyle{siam}
\bibliography{biblio}

\begin{thebibliography}{10}

\bibitem{Aumann2}
{\sc R.~Aumann}, {\em Existence of competitive equilibria in markets with a
  continuum of traders}, Econometrica, 32 (1964), pp.~39--50.

\bibitem{Aumann}
\leavevmode\vrule height 2pt depth -1.6pt width 23pt, {\em Markets with a
  continuum of traders}, Econometrica, 34 (1966), pp.~1--17.

\bibitem{BC}
{\sc A.~Blanchet and G.~Carlier}, {\em Optimal transport and {C}ournot-{N}ash
  equilibria}.
\newblock Preprint http://arxiv.org/abs/1206.6571, 2012.

\bibitem{abgc2}
\leavevmode\vrule height 2pt depth -1.6pt width 23pt, {\em Remarks on existence
  and uniqueness of {C}ournot-{N}ash equilibria in the non-potential case}.
\newblock Preprint, 2014.

\bibitem{Carda}
{\sc P.~Cardialaguet}, {\em Notes on mean field games}.
\newblock https://www.ceremade.dauphine.fr/$\sim$cardalia/MFG20130420.pdf,
  2013.

\bibitem{mfg1}
{\sc J.-M. Lasry and P.-L. Lions}, {\em Jeux \`a champ moyen. i. le cas
  stationnaire}, C. R. Math. Acad. Sci. Paris, 343 (2006), pp.~619--625.

\bibitem{mfg2}
\leavevmode\vrule height 2pt depth -1.6pt width 23pt, {\em Jeux \`a champ
  moyen. ii. horizon fini et contr\^ole optimal}, C. R. Math. Acad. Sci. Paris,
  343 (2006), pp.~679--684.

\bibitem{mfg3}
\leavevmode\vrule height 2pt depth -1.6pt width 23pt, {\em Mean field games},
  Jpn. J. Math., 2 (2007), pp.~229--260.

\bibitem{MasColell}
{\sc A.~Mas-Colell}, {\em On a theorem of {S}chmeidler}, J. Math. Econ., 3
  (1984), pp.~201--206.

\bibitem{Nash1950}
{\sc J.~F. Nash}, {\em Equilibrium points in $n$-person games}, Proceedings of
  the National Academy of Sciences of the United States of America, 36 (1950),
  pp.~48--49.

\bibitem{Schmeidler}
{\sc D.~Schmeidler}, {\em Equilibrium points of nonatomic games}, J. Stat.
  Phys., 7 (1973), pp.~295--300.

\bibitem{villani}
{\sc C.~Villani}, {\em Topics in optimal transportation}, vol.~58 of Graduate
  Studies in Mathematics, American Mathematical Society, Providence, RI, 2003.

\bibitem{villani2}
\leavevmode\vrule height 2pt depth -1.6pt width 23pt, {\em Optimal transport:
  old and new}, Grundlehren der mathematischen Wissenschaften, Springer-Verlag,
  2009.

\end{thebibliography}
\end{document}